\title{Functions on surfaces and constructions of manifolds in dimensions three, four and five}
\author{David T Gay}
\address{Euclid Lab\\ 160 Milledge Terrace\\ Athens, GA 30606\\\newline Department of Mathematics\\ University
  of Georgia\\ Athens, GA 30602}
\email{d.gay@euclidlab.org}
\newtheorem{theorem}{Theorem}
\newtheorem{lemma}[theorem]{Lemma}
\theoremstyle{definition}
\newtheorem{definition}[theorem]{Definition}
\def\Z{\mathbb Z}
\def\R{\mathbb R}
\def\C{\mathbb C}
\def\e{\epsilon}
\def\cP{\mathcal{P}}
\newcommand{\id}{\mathop{\rm id}\nolimits}
\newcommand{\Crit}{\mathop{\rm Crit}\nolimits}
\begin{document}

\begin{abstract}    
 We offer a new proof that two closed oriented $4$--manifolds are cobordant if their signatures agree, in the spirit of Lickorish's proof~\cite{Lickorish} that all closed oriented $3$--manifolds bound $4$--manifolds. Where Lickorish uses Heegaard splittings we use trisections. In fact we begin with a subtle recasting of Lickorish's argument: Instead of factoring the gluing map for a Heegaard splitting as a product of Dehn twists, we encode each handlebody in a Heegaard splitting in terms of a Morse function on the surface and build the $4$--manifold from a generic homotopy between the two functions. This extends up a dimension by encoding a trisection of a closed $4$--manifold as a triangle (circle) of functions and constructing an associated $5$--manifold from an extension to a $2$--simplex (disk) of functions. This borrows ideas from Hatcher and Thurston's proof~\cite{HatcherThurston} that the mapping class group of a surface is finitely presented.
\end{abstract}

\maketitle


The central theme of this paper is that manifolds of dimension $n+3$ can be described by generic $n$--parameter families of functions on $2$--manifolds (at least for $n \leq 2$). After some preparatory discussion and definitions, this is formulated as our main theorem, Theorem~\ref{T:Universal}. The following well known theorems are then proved as corollaries:

\begin{theorem}[Rohlin~\cite{RohlinO3}, Thom~\cite{Thom}] \label{T:3D}
 For every closed, oriented $3$--manifold $M$ there is a compact, oriented $4$--manifold $X$ with $\partial X = M$.
\end{theorem}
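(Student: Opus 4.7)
The plan is to follow the strategy advertised in the abstract: present $M$ via a Heegaard splitting $M = H_0 \cup_\Sigma H_1$, encode each handlebody $H_i$ by a Morse function $f_i \colon \Sigma \to \mathbb{R}$, and feed a generic $1$--parameter family of functions on $\Sigma$ interpolating between $f_0$ and $f_1$ into Theorem~\ref{T:Universal} to produce the bounding $4$--manifold $X$.

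Concretely, I would first fix a Heegaard splitting of $M$, which exists for every closed orientable $3$--manifold. Next I would produce Morse functions $f_0, f_1$ on $\Sigma$ that encode $H_0$ and $H_1$ in the sense required by the definitions preceding Theorem~\ref{T:Universal}. Heuristically, an $f_i$ encoding $H_i$ is a Morse function on the boundary surface whose critical points and associated gradient-flow data single out a cut system of compressing disks for $H_i$; since every handlebody admits a cut system, the existence of such $f_i$ should not be an obstacle. I would then pick a generic smooth homotopy $\{f_t\}_{t \in [0,1]}$ from $f_0$ to $f_1$; by standard transversality this path is Morse at all but finitely many times, at which only the codimension--one degenerations (births, deaths, and handle slides) appear. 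This is precisely the $n=1$ input for Theorem~\ref{T:Universal}, which outputs a compact oriented $4$--manifold $X$ whose boundary is the closed $3$--manifold assembled from the pair $(f_0, f_1)$ by the $0$--parameter (endpoint) version of the recipe; under the encoding this boundary $3$--manifold is exactly $H_0 \cup_\Sigma H_1 = M$.

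The main obstacle, I expect, is verifying that the encoding supplied by Theorem~\ref{T:Universal} faithfully recovers the Heegaard decomposition of $M$ from the Morse data $(f_0, f_1)$: both that the individual pieces of $\partial X$ come out as the chosen handlebodies $H_0$ and $H_1$ (rather than some other $3$--manifolds sharing the boundary $\Sigma$), and that they are glued along $\Sigma$ by the identity map dictated by the Heegaard splitting. Once this compatibility is pinned down, checking that the orientation conventions built into the construction make $X$ into an oriented cobordism, rather than merely orientable, is the remaining routine bookkeeping.
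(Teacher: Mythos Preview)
Your proposal is correct and follows essentially the same route as the paper: realize $M$ as $\cP(F)$ for a generic $F:S^0\to C^\infty(\Sigma)$ coming from a Heegaard splitting (item~\ref{I:all234d} of Theorem~\ref{T:Universal}), extend $F$ to a generic $G:B^1\to C^\infty(\Sigma)$, and set $X=\cP(G)$, so that $\partial X = M$ by item~\ref{I:bound}. One small terminological correction: the codimension--one events along a generic path of Morse functions on $\Sigma$ are births/deaths and \emph{critical value crossings} (height switches), not ``handle slides''; the latter arise only indirectly, as certain height switches change the induced cut system by a slide.
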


\begin{theorem}[Pontrjagin, Rohlin~\cite{RohlinO4}] \label{T:4D}
 For every closed, oriented $4$--manifold $X$ there is a compact, oriented $5$--manifold $Z$ with $\partial Z = X \amalg (\amalg^p \C P^2) \amalg (\amalg^q \overline{\C P^2})$, for some $p, q \geq 0$.
\end{theorem}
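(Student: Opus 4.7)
The plan is to apply Theorem~\ref{T:Universal} with parameter count $n=2$, in exact analogy with the $n=1$ application used to deduce Theorem~\ref{T:3D}. Given a closed oriented $4$--manifold $X$, I first invoke a trisection of $X$ (in the sense of Gay--Kirby). A trisection with central surface $\Sigma$ is naturally encoded by three Morse functions $f_1,f_2,f_3$ on $\Sigma$, one for each handlebody, joined pairwise by generic Cerf-theoretic paths in the space of functions on $\Sigma$. These three paths concatenate to a loop, i.e.\ a triangle, and Theorem~\ref{T:Universal} (with $n=2$) should recover $X$ from this triangle of functions on $\Sigma$.

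The heart of the proof is to fill this triangle in: to extend the $S^1$--family of functions on $\Sigma$ over the $2$--disk, producing a generic $2$--parameter family. If such an extension exists then Theorem~\ref{T:Universal} directly provides a compact oriented $5$--manifold $Z$ bounded by $X$. An arbitrary triangle of functions will not bound, however, so the strategy is to first modify it using stabilization moves on trisections, which lift to prescribed elementary operations on the triangle (increasing the genus of $\Sigma$ and adjusting the $f_i$ by a standard local model). The fact that any two trisections of $X$ become isotopic after enough stabilizations, combined with the Hatcher--Thurston-style finite presentability of an appropriate complex of cut systems on $\Sigma$, means that after sufficient stabilization the triangle is homotopic in function space to one that manifestly bounds a disk. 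Applying Theorem~\ref{T:Universal} to this disk of functions gives the desired $Z$.

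Each stabilization move, being performed in the $2$--parameter family, alters the boundary of $Z$ not by $X$ alone but by $X$ together with an additional closed $4$--manifold summand produced by the local model for that move. The trivial stabilization produces an $S^4$ summand which can be discarded, but the \emph{non-trivial} moves --- those whose local $2$--parameter family has a swallowtail-type singularity with nontrivial self-intersection data on $\Sigma$ --- produce a $\C P^2$ or $\overline{\C P^2}$ boundary component, with sign determined by the orientation of the local model. Collecting these contributions, one obtains
\[
\partial Z \;=\; X \;\amalg\; (\amalg^p \C P^2) \;\amalg\; (\amalg^q \overline{\C P^2})
\]
for some $p,q \geq 0$.

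The main obstacle will be the filling step: one must show that a bounded amount of stabilization suffices, and must identify explicitly the local $2$--parameter models that appear during the disk-filling and check that each contributes precisely a $\pm\C P^2$ to the boundary. This plays the role that ``factoring a diffeomorphism as a product of Dehn twists'' played in Lickorish's proof; here it is replaced by a Cerf/Hatcher--Thurston argument one dimension up.
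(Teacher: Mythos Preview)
Your proposal contains a genuine misconception about where the difficulty lies. You write that ``an arbitrary triangle of functions will not bound'' and then devote the rest of the argument to stabilizing the trisection until the triangle becomes null-homotopic. But $C^\infty(\Sigma)$ is an affine (hence contractible) space, so \emph{every} generic $F:S^1\to C^\infty(\Sigma)$ extends to some $G:B^2\to C^\infty(\Sigma)$, and a small perturbation makes $G$ generic. The paper states this extension fact explicitly just after Definition~\ref{D:generic}. There is no obstruction to filling the triangle, and no stabilization is needed.

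Consequently your account of where the $\C P^2$ summands come from is also off. They do not arise from ``non-trivial stabilization moves'' or from swallowtails; in the paper's analysis swallowtails give type~$0$ elementary polygons, which contribute nothing extra to $\partial Z$. The $\pm\C P^2$'s come instead from the internal structure of the generic $2$--parameter family $G$: when one decomposes $B^2$ into elementary polygons, certain polygons (those of type~$2$, associated to a \emph{triple} critical value crossing in which three index~$1$ critical points interact in a genus~$1$ subsurface) have $\cP(G|_{\partial P_i})\cong (X^\pm_g,\Sigma^\pm_g)$, and filling such a polygon leaves a $\pm\C P^2$ boundary component. Once Theorem~\ref{T:Universal} is in hand, the actual proof of Theorem~\ref{T:4D} is two lines: realize $X$ as $\cP(F)$ for some $F:S^1\to C^\infty(\Sigma)$ (item~\ref{I:all234d}), extend $F$ generically over $B^2$, and read off $\partial\cP(G)$ from item~\ref{I:5bound}. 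The work you describe (trisections, Cerf-theoretic paths between handlebody functions) goes into proving item~\ref{I:all234d} for $n=1$, not into deducing Theorem~\ref{T:4D} from Theorem~\ref{T:Universal}.
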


Theorem~\ref{T:3D} is compactly stated as $\Omega_3=0$, i.e. the oriented $3$--dimensional cobordism group is trivial. Theorem~\ref{T:4D} is usually stated by saying that $\Omega_4=\Z$, i.e. that the oriented $4$--dimensional cobordism group is $\Z$, with generator $\C P^2$; this follows from our statement and the fact that the signature is a $4$--dimensional cobordism invariant~\cite{Thom}.

Our proof of Theorem~\ref{T:3D} is very similar in spirit to Lickorish's proof~\cite{Lickorish}, but our proof of Theorem~\ref{T:4D} is quite different from other proofs, and depends on the existence of trisections of $4$--manifolds~\cite{GayKirbyTrisections}, in the same way that Lickorish's proof depends on the existence of Heegaard splittings of $3$--manifolds. In a nutshell, the idea is to use generic $B^n$-- or $S^n$--parameterized families of smooth functions on closed, connected, oriented surfaces to construct $(n+3)$--manifolds. We work only in the cases where $n \leq 2$, but leave open the interesting question of to what extent these ideas extend to higher dimensions.

The author is aware of two other papers which relate the construction and existence of cobordisms with ``higher Morse theory'', but in subtly different ways: In~\cite{CostantinoThurston}, $4$--manifolds bounded by given $3$--manifolds are constructed using information coming from generic (stable) smooth maps from $3$--manifolds to $\mathbb{R}^2$. In~\cite{SaekiCobordism}, it is shown how to reconstruct a $4$--manifold up to $5$--dimensional cobordism from data associated with stable smooth maps from $4$--manifolds to $3$--manifolds; from this one sees that $\mathbb{C}P^2$ generates $\Omega_4$.

Throughout this paper all manifolds are smooth and oriented. Unless explicitly stated otherwise, all surfaces ($2$--manifolds) are closed, connected and oriented, but manifolds in other dimensions may have nonempty boundary. (In fact, our manifolds with boundary may have corners as well, and a ``diffeomorphism'' between two manifolds with boundary is allowed to bend at the corners. It should be clear from context when this is happening and that this does not raise any significant issues.)

We will work with smooth parameterized families of real-valued functions on a surface $\Sigma$; when the parameter space is $A$, we will denote such a family as a function $F: A \to C^\infty(\Sigma)$, always with the understanding that this really means that the associated function $A \times \Sigma \to \R$ is smooth. Our parameter spaces will always be spheres or balls. Given $F: A \to C^\infty(\Sigma)$ we will use the notation $\partial F : \partial A \to C^\infty(\Sigma)$ to refer to $F|_{\partial A}$.

Recall~\cite{Sharko}, when the dimension of $A$ is $0$, $1$ or $2$, that such a parameterized family of functions $F: A \to C^\infty(\Sigma)$ is {\em stable} if for each $p \in A$ and $q$ in $\Sigma$ we can find coordinates near $p$, $q$ and $F(p,q) \in \R$ with respect to which $F: A \times \Sigma \to \R$ has one of the following models, listed according to the dimension of $A$ (where $(x,y)$ are the coordinates on $\Sigma$):
\begin{enumerate}
 \setcounter{enumi}{-1}
 \item When $\dim(A)=0$ we have two options:
 \begin{description}
  \item[Regular point] $(x,y) \mapsto x$
  \item[Morse critical point] $(x,y) \mapsto \pm x^2 \pm y^2$
 \end{description}
 \item When $\dim(A)=1$ we have three options (here $t$ is the coordinate on $A$):
 \begin{description}
  \item[Path of regular points] $(t,x,y) \mapsto x$
  \item[Path of Morse critical points] $(t,x,y) \mapsto \pm x^2 \pm y^2$
  \item[Birth/death event] $(t,x,y) \mapsto x^3 + t x \pm y^2$, with $t=0$ not on $\partial A$.
 \end{description}
 \item When $\dim(A)=2$ we have four options (here $s$ and $t$ are the coordinates on $A$):
 \begin{description}
  \item[Disk of regular points] $(s,t,x,y) \mapsto x$
  \item[Disk of Morse critical points] $(s,t,x,y) \mapsto \pm x^2 \pm y^2$
  \item[Path of birth/death events] $(s,t,x,y) \mapsto x^3 + t x \pm y^2$, with $\{t=0\}$ transverse to $\partial A$.
  \item[Swallowtail] $(s,t,x,y) \mapsto x^4 + s x^2 + t x \pm y^2$, with $s=t=0$ not on $\partial A$.
 \end{description}
\end{enumerate}

Furthermore, given such a family $F : A \to C^\infty(\Sigma)$ we can also think of $F$ as a function $F: A \times \Sigma \to A \times \R$, being $\id_A$ on the first factor, and consider the locus of critical points $\Crit(F) \subset A \times \Sigma$; this is a smooth codimension $2$ submanifold mapped by $F$ to $A \times \R$. We then impose the following transversality conditions, again listed according to the dimension of $A$:
\begin{enumerate}
 \setcounter{enumi}{-1}
 \item If $\dim(A)=0$ then, for each $p \in A$, we require that $F$ is injective on $\Crit(F)$, i.e. all Morse critical points have distinct critical values
 \item If $\dim(A)=1$, then $\Crit(F) = \Crit_0(F) \cup \Crit_1(F)$, where $\Crit_0(F)$ is the locus of critical points that arise in dimension $0$, i.e. Morse critical points, and $\Crit_1(F)$ is the locus of birth/death events. Thus $\Crit_0(F)$ is $1$--dimensional and $\Crit_1(F)$ is $0$--dimensional. Here we require that:
 \begin{enumerate}
  \item $F$ is injective on $\Crit_1(F)$,
  \item the image of $\Crit_1(F)$ is disjoint from the image of $\Crit_0(F)$ and from $\partial A \times \R$, and
  \item $\Crit_0(F)$ is immersed with at worst transverse double point self-intersections, which should occur away from $\partial A \times \R$. (We call this event, which occurs at isolated parameter values, a {\em critical value height switch}, or simply a {\em height switch}.)
 \end{enumerate}
 \item If $\dim(A) = 2$ then $\Crit(F) = \Crit_0(F) \cup \Crit_1(F) \cup \Crit_2(F)$, where $\Crit_0(F)$ is the locus of critical points that arise in dimension $0$, $\Crit_1(F)$ is the locus of birth/death events, and $\Crit_2(F)$ is the locus of swallowtails. Now $\Crit_0(F)$ is $2$--dimensional, $\Crit_1(F)$ is $1$--dimensional and $\Crit_2(F)$ is $0$--dimensional. We now require that:
 \begin{enumerate}
  \item $F$ is injective on $\Crit_2(F)$,
  \item the image of $\Crit_2(F)$ is disjoint from the images of $\Crit_1(F)$ and $\Crit_0(F)$ and from $\partial A \times \R$,
  \item the image of $\Crit_1(F)$ is embedded and transverse to the image of $\Crit_0(F)$ and to $\partial A \times \R$, and
  \item $\Crit_0(F)$ is immersed with at worst arcs of transverse double points (the arcs being transverse to $\partial A \times \R$) and isolated transverse triple points (away from $\partial A \times \R$). The arcs of transverse double points are thus $1$--parameter families of critical value height switches and the isolated triple points correspond to three critical values switching heights in a $2$--parameter fashion, to be discussed in more detail later.
 \end{enumerate}
\end{enumerate}

We will abuse the term ``generic'' in reference to such families.
\begin{definition} \label{D:generic}
 If $A \in \{B^0,S^0,B^1,S^1,B^2\}$, an $A$-parameterized family of functions on a surface $\Sigma$, namely $F: A \to C^\infty(\Sigma)$, is {\em generic} if it is stable and satisfies the appropriate transversality condition on $\Crit(F)$ described above. For purely formal completeness, if $A = S^{-1}$ we declare there to be a unique $S^{-1}$--parametrized family, and declare it to be {\em generic}.
\end{definition}
 
All we really need in this paper are the appropriate (and standard) relative existence statements: Every generic $G : \partial A \to C^\infty(\Sigma)$ extends to a generic $F : A \to C^\infty(\Sigma)$ with $\partial F = G$.

\begin{definition} \label{D:MSPair}
 A {\em manifold-surface pair} is a pair $(Y,\Sigma)$, where $Y$ is a manifold of some dimension $\geq 2$ and $\Sigma \subset Y$ is a surface submanifold of $Y$. If $\partial Y \neq \emptyset$, we require that $\Sigma \subset \partial Y$. We define the {\em boundary of a manifold-surface pair} $\partial (Y, \Sigma)$ to be either $(\emptyset,\emptyset)$ if $\partial Y = \emptyset$ or to be $(\partial Y, \Sigma)$ if $\partial Y \neq \emptyset)$. An embedding (resp. diffeomorphism) $\phi: Y \to Y'$ is an {\em embedding (resp. diffeomorphism) of pairs} $(Y,\Sigma) \to (Y,\Sigma')$ if $\phi(\Sigma) = \Sigma'$, and is an embedding (resp. diffeomorphism) {\em rel $\Sigma$} if $\Sigma = \Sigma'$ and $\phi|_\Sigma$ is the identity map. The {\em dimension} of a pair $(Y,\Sigma)$ is the dimension of $Y$.
\end{definition}

We now present the main idea of this paper in the form of a single theorem, and then we will prove Theorems~\ref{T:3D} and ~\ref{T:4D} as immediate corollaries.

\begin{theorem} \label{T:Universal}
 For each $A \in \{S^{-1},B^0,S^0,B^1,S^1,B^2\}$, with $\dim(A) = n \in \{-1,0,1,2\}$, there exists a function $\cP_A = \cP$ (we generally drop the subscript) from the set of generic $A$-parameterized functions on surfaces to the set of $(n+3)$--dimensional manifold-surface pairs, satisfying the following properties:
 \begin{enumerate}
  \item If $A = S^{-1} = \emptyset$ and $F: A \to C^\infty(\Sigma)$ is the unique $A$-parameterized family of functions on a fixed surface $\Sigma$, then $\cP(F) = (\Sigma,\Sigma)$. \label{I:Sm1}
  \item For $A \in \{S^{-1},B^0, S^0, B^1, S^1\}$, $\partial \cP(F) = \cP (\partial F)$. (When $\partial A = \emptyset$, $\cP(\partial F) = \emptyset$.) \label{I:bound}
  \item For $A = B^2$, for every $\Sigma$ and every generic $F: A \to C^\infty(\Sigma)$ there are non-negative integers $p$ and $q$ such that $\partial \cP(F) = \cP(\partial F) \amalg (\amalg^p (\C P^2,\emptyset)) \amalg (\amalg^q (\overline{\C P^2},\emptyset))$. \label{I:5bound}
  \item For every closed connected $(n+3)$--manifold $M$, where $n \in \{-1,0,1\}$, there is a surface $\Sigma \subset M$ and a generic $F: S^n \to C^\infty(\Sigma)$ such that $(M,\Sigma)$ is diffeomorphic rel. $\Sigma$ to $\cP(F)$. \label{I:all234d}
 \end{enumerate}
\end{theorem}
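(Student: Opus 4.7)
The plan is to build $\cP_A$ by induction on $\dim A$, giving a concrete recipe that assembles an $(n{+}3)$-dimensional manifold-surface pair from the handle data of a generic $n$-parameter family of functions. Properties (\ref{I:Sm1})--(\ref{I:5bound}) will fall out of the construction, and (\ref{I:all234d}) is reduced to the existence of Heegaard splittings and of trisections. For $A=S^{-1}$ I set $\cP(F)=(\Sigma,\Sigma)$. For $A=B^0$, a Morse function $f:\Sigma\to\R$ is assigned a $3$-dimensional handlebody $H_f$ with $\partial H_f=\Sigma$ by choosing a gradient-like vector field and thickening the descending cells of $f$ into a compact handlebody to one side of $\Sigma$, equivalently by attaching a $3$-dimensional $(i{+}1)$-handle to $\Sigma\times[0,1]$ for each index-$i$ critical point of $f$, ordered by critical value. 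For $A=S^0=\{\pm\}$ with $F(\pm)=f_\pm$ I set $\cP(F)=(H_{f_-}\cup_\Sigma H_{f_+},\Sigma)$, a closed $3$-manifold with Heegaard surface $\Sigma$. Property (\ref{I:bound}) is immediate in these cases.

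For $A=B^1$, a generic path $F_t$ has only birth/death events and height switches as codimension-one singularities. I would build $\cP(F)$ by stacking elementary $4$-dimensional cobordisms on $H_{f_0}\times[0,1]$: each birth/death contributes a $4$-dimensional $2$- or $3$-handle attachment dictated by the local Cerf model $x^3+tx\pm y^2$, and each height switch contributes a trivial cobordism realizing a handle slide on the boundary. The result has boundary $H_{f_0}\cup_\Sigma H_{f_1}=\cP(\partial F)$, and for $A=S^1$, with $\partial S^1=\emptyset$, the same construction closes up to a closed $4$-manifold.

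For $A=B^2$, the new generic events are swallowtails, arcs of births/deaths, and triple height-switch points. The $5$-manifold $\cP(F)$ is assembled analogously from $5$-dimensional elementary cobordisms associated to these codimension-two strata. The crucial step, which I expect is the main technical obstacle, is the local analysis at each swallowtail with normal form $x^4+sx^2+tx\pm y^2$: this contributes a $5$-dimensional building block whose net effect on the boundary is a connect sum with a copy of $\C P^2$ or $\overline{\C P^2}$, producing exactly the $p$ and $q$ summands in~(\ref{I:5bound}).

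For the realization property~(\ref{I:all234d}): when $n=-1$ there is nothing to check; when $n=0$, every closed $3$-manifold $M$ has a Heegaard splitting $M=H_-\cup_\Sigma H_+$, and a separate lemma, extracting Morse data from a suitable Morse function on each handlebody, shows that every handlebody with boundary $\Sigma$ equals $H_f$ for some Morse $f$, giving the required $F=(f_-,f_+):S^0\to C^\infty(\Sigma)$; when $n=1$, one invokes the existence of trisections from~\cite{GayKirbyTrisections}, which yields three handlebodies meeting along a central surface $\Sigma$, three Morse functions $f_1,f_2,f_3$ realizing them, and a generic $S^1$-family obtained by connecting them pairwise by generic paths in $C^\infty(\Sigma)$ and verifying that the resulting $\cP(F)$ is diffeomorphic rel.\ $\Sigma$ to $(X,\Sigma)$. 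A subsidiary point is that all the local handle recipes must be seen to be independent of auxiliary choices, which should follow from the contractibility of the relevant parameter spaces.
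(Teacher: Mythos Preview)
Your inductive outline is sound in spirit, but the local analysis is misattributed in two places, and these are precisely where the content of the argument lies.

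For $\cP_{B^1}$: you assert that birth/death events contribute $4$--dimensional $2$-- or $3$--handles while height switches contribute trivial cobordisms realizing handle slides. It is the reverse. A birth or death of a cancelling pair on $\Sigma$ takes place inside a genus--$0$ subsurface (the critical level component is a smooth circle), so the handlebody filling $H_{f_t}$ does not change at all across such an event; the associated elementary $4$--manifold is a product $[0,1]\times H_g\cong\natural^g S^1\times B^3$. By contrast, a height switch of two index--$1$ critical points can occur inside a genus--$1$ subsurface of $\Sigma$, and when it does the handlebody filling genuinely changes: a compressing disk is traded for a dual curve meeting it once, and the elementary $4$--manifold is $\natural^{g-1}S^1\times B^3$, not a product. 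This dichotomy (type~$0$ versus type~$1$) is where the nontrivial $4$--dimensional topology enters. It is also essential for item~(\ref{I:all234d}) with $n=1$: one cannot simply connect the three trisection Morse functions by \emph{arbitrary} generic paths, since an arbitrary path would produce an uncontrolled $4$--manifold; one must first realize each piece $\natural^k S^1\times B^3$ by a specific path containing $g-k$ type--$1$ switches, and then interpolate between endpoints using paths consisting only of type--$0$ events.

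For $\cP_{B^2}$: you identify swallowtails as the source of the $\C P^2$ and $\overline{\C P^2}$ summands. They are not. The swallowtail singular level is a figure--eight whose neighborhood in $\Sigma$ has genus~$0$, so the handlebody filling is constant near a swallowtail and the associated elementary polygon is always trivial (type~$0$). The $\pm\C P^2$ pieces arise instead from \emph{triple critical value switches}: three index--$1$ critical points at a common height whose shared level component has a genus--$1$ neighborhood. The boundary of the corresponding elementary hexagon can contain up to three type--$1$ edges; when it contains exactly three, the three bounding curves in the torus summand are a meridian, a longitude, and a $(1,\pm1)$--curve, and the resulting $4$--manifold is $(\#^{g-1}S^1\times S^3)\#(\pm\C P^2)$, filled by $(\natural^{g-1}S^1\times B^4)\natural([0,1]\times(\pm\C P^2))$, which leaves a $\pm\C P^2$ boundary component.
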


In fact
$\cP_A$ can be constructed for $A=S^2$ using our techniques but we have no use for this case in this paper, and cannot state an existence result like item~\ref{I:all234d} above, so we leave this for others to investigate more thoroughly, along with higher dimensional parameter spaces.

\begin{proof}[Proof of Theorem~\ref{T:3D}]
 Given a closed connected $3$--manifold $M$, choose a surface $\Sigma \subset M$ and a generic $F: S^0 \to C^\infty(\Sigma)$ such that $(M,\Sigma)$ is diffeomorphic rel. $\Sigma$ to $\cP(F)$. Extend $F$ to a generic $G: B^1 \to C^\infty(\Sigma)$ with $G|_{\partial B^1} = F$, and let $(X,\Sigma) = \cP(G)$. Then $X$ is a $4$--manifold and $\partial X = M$.
\end{proof}

\begin{proof}[Proof of Theorem~\ref{T:4D}]
 Given a closed connected $4$--manifold $X$, choose a surface $\Sigma \subset X$ and a generic $F: S^1 \to C^\infty(\Sigma)$ such that $(X,\Sigma)$ is diffeomorphic rel. $\Sigma$ to $\cP(F)$. Extend $F$ to a generic $G: B^2 \to C^\infty(\Sigma)$ with $G|_{\partial B^2} = F$, and let $(Z,\Sigma) = \cP(G)$. Then $Z$ is a $5$--manifold and $\partial Z = X \amalg (\amalg^p \C P^2) \amalg (\amalg^q \overline{\C P^2})$.
\end{proof}

\begin{proof}[Proof of Theorem~\ref{T:Universal}]
 This proof occupies the remainder of the paper but we give the outline here, to be filled in by means of explicit constructions and lemmas later. There is nothing to prove for item~\ref{I:Sm1}, as this constitutes the definition of the function $\cP_{S^{-1}}$. Items~\ref{I:bound}
 and~\ref{I:5bound} will all be proved as part of the construction of each $\cP_A$ for $A \in \{B^0,S^0,B^1,S^1,B^2\}$. Item~\ref{I:all234d} is trivial in the case $n=-1$, is explained for $n=0$ in the construction of $\cP_{S^0}$ as a simple consequence of the existence of Heegaard splittings of $3$--manifolds, and will be proved for $n=1$ via a sequence of lemmas after the constructions of $\cP_{B^1}$ and $\cP_{S^1}$, ultimately as a consequence of the existence of trisections of $4$--manifolds.
\end{proof}


\begin{definition} \label{D:RegComponent}
 A {\em regular level component} of a smooth real-valued function $f$ on a surface $\Sigma$ is a connected component $C$ of a level set $f^{-1}(z)$ of $f$ such that $C$ contains no critical points of $f$. Note that $z \in \R$ may be a critical value, but none of its critical preimages may be in the component $C$. A {\em critical level component} of $f$ is a connected component of a level set containing at least one critical point.
\end{definition}

Of course, if $\Sigma$ is closed then all regular level components are smooth circles. Critical level components may be arbitrarily complicated depending on the nature of the singularities involved.

\begin{definition}
 Given a surface $\Sigma$, a {\em handlebody filling} of $\Sigma$ is a pair $(H,\Sigma)$, where $H$ is a $3$--dimensional handlebody with $\Sigma = \partial H$.
\end{definition}

\begin{lemma} \label{L:FcnToHBody}
Given a surface $\Sigma$ and a Morse function $f: \Sigma \to \R$ which is injective on critical points, there is a handlebody filling $(H,\Sigma)$ of $\Sigma$ with the property that every regular level component of $f$ bounds a disk in $H$. Any two such handlebody fillings of $\Sigma$ are diffeomorphic rel. $\Sigma$, and any handlebody filling of $\Sigma$ arises in this way from some such Morse function on $\Sigma$.
\end{lemma}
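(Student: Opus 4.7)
I would build $H$ from the Reeb graph of $f$. Let $\Gamma_f := \Sigma/{\sim}$ where $p\sim q$ iff they lie in a common connected component of a level set of $f$, with quotient map $\pi:\Sigma\to\Gamma_f$. Since $f$ is Morse with distinct critical values, $\Gamma_f$ is a finite graph with a valence-$1$ vertex for each local extremum of $f$ and a trivalent vertex for each saddle; moreover $\pi$ restricts to a smooth circle bundle over each edge interior, the preimage under $\pi$ of a small neighborhood of a valence-$1$ vertex is a disk, and the preimage of a small neighborhood of a trivalent vertex is a pair of pants. Smoothly embed $\Gamma_f$ in $\R^3$ and set $H := N(\Gamma_f)$, a closed regular neighborhood; this is a handlebody of genus $\operatorname{rank} H_1(\Gamma_f)$. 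The tubular-neighborhood retraction $r:\partial H\to\Gamma_f$ has exactly the same local structure, so matching these local models and gluing along the circle bundles over edge interiors yields a diffeomorphism $\partial H\cong\Sigma$ with $r=\pi$. Under this identification every regular level component of $f$ is a meridian of $\Gamma_f$ and therefore bounds a compressing disk in $H$.

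\textbf{Uniqueness.} If $(H,\Sigma)$ and $(H',\Sigma)$ both satisfy the conclusion, pick one regular level circle $C_e$ per edge $e$ of $\Gamma_f$; by hypothesis each $C_e$ bounds disks $D_e\subset H$ and $D_e'\subset H'$. Cutting $H$ and $H'$ along these disks decomposes each into one $3$-ball per vertex of $\Gamma_f$ (each ball is bounded by a disk cap together with a single $D_e$ at each extremum, and by a pair of pants together with three $D_e$'s at each saddle). Matching corresponding balls produces a diffeomorphism $H\cong H'$ rel $\Sigma$.

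\textbf{Converse and main obstacle.} Conversely, given any handlebody filling $(H,\Sigma)$, choose a spine $\Gamma\subset H$ with only valence-$1$ and valence-$3$ vertices such that $N(\Gamma)=H$, embed $\Gamma\subset\R^3$ with a height function $h$ whose vertex values are all distinct and such that each trivalent vertex has either two up-edges and one down-edge or the reverse, and define $f:=h\circ r:\Sigma\to\R$, where $r$ is the tubular-neighborhood retraction; after a small perturbation $f$ is Morse with distinct critical values (one extremum per valence-$1$ vertex of $\Gamma$, one saddle per trivalent vertex), and its regular level components are meridians of $\Gamma$, bounding compressing disks in $H$. The step I expect to be most delicate is the identification $\partial H\cong\Sigma$ carrying $r$ to $\pi$ over trivalent vertices: one must check that the pair-of-pants fiber of $\pi$ near a saddle really does match the pair-of-pants fiber of the tubular-neighborhood retraction, compatibly with the circle-bundle structures on either side. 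This reduces to writing down an explicit local Morse model near each saddle, which is elementary but requires care.
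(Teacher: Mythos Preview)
Your proof is correct, but the route differs from the paper's in two places. For existence you build $H$ explicitly as a regular neighborhood of the Reeb graph $\Gamma_f$ and identify $\partial H$ with $\Sigma$ by matching local models; the paper does not give an explicit construction at all, instead folding existence into the uniqueness discussion. For uniqueness you cut $H$ and $H'$ along one meridian disk per edge of $\Gamma_f$, observe that the pieces are $3$--balls (by irreducibility of handlebodies), and reassemble; the paper's argument is more local and conceptual: it simply observes that, because $f$ is injective on critical points, each critical level component has a genus~$0$ neighborhood in $\Sigma$ (a disk or pair of pants), and therefore the requirement that all regular level components bound disks already forces every simple closed curve on $\Sigma$ to have determined behavior in $H$---there is no room for two different fillings. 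Your converse argument (height function on a spine embedded in $\R^3$) is essentially the same as the paper's.

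Your approach is more constructive and self-contained; the Reeb graph makes the handlebody tangible and your cut-into-balls argument is a clean direct proof. The paper's approach is terser and isolates the single observation---genus~$0$ neighborhoods of critical level components---that it then reuses repeatedly when analyzing $1$-- and $2$--parameter families (birth/deaths, height switches, swallowtails, triple crossings). So the paper's formulation is optimized for what comes next, while yours stands better on its own. The ``main obstacle'' you flag (matching the pair-of-pants models at trivalent vertices) is real but routine: the local Morse model $x^2-y^2$ gives an explicit identification, and the paper sidesteps it entirely by never writing down an explicit $H$.
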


\begin{proof}
 The only ambiguity in the definition of $H$ comes from critical level components of $f$. Suppose $p \in \Sigma$ is a critical point of $f$, with $f(p)=a$, and let $\e>0$ be such that $[a-\e,a+\e]$ contains no critical values other than $a$. Let $\Gamma$ be the component of $f^{-1}(a)$ containing $p$ and let $N$ be the component of $f^{-1}([a-\e,a+\e])$ containing $p$. All boundary components of $N$ are regular level components of $f$ and thus bound disks in $H_f$; if we can show that $N$ is a subsurface of $\Sigma$ of genus $0$, then all simple closed curves on $N$ will automatically bound in $H_f$ and thus there will in fact be no ambiguity in the definition of $H_f$. Because $f$ is injective on critical points, $p$ is the only critical point in $N$, and $\Gamma$ is the only critical level set in $N$. If $p$ is a critical point of index $0$ or $2$, $N$ is a disk and thus has genus $0$. If $p$ has index $1$ then $\Gamma$ is a figure eight graph. Because $\Sigma$ is oriented and because $\Gamma$ separates $N$ into two halves $N \cap f^{-1}([0,a+\e])$ and $N \cap f^{-1}([a-\e,0])$, we see that $N$ is a pair of pants, with either two boundary components in $f^{-1}(a-\e)$ and one in $f^{-1}(a+\e)$ or vice versa. Thus $N$ again has genus $0$.
 
 To see that every handlebody filling of $\Sigma$ arises this way, note that for each $g$ there exists a standard genus $g$ handlebody $H_g$ embedded in $\R^3$ such that the standard height function is a Morse function on $\partial H_g$ with regular level components bounding disks in $H_g$. Given a particular handlebody filling $(H,\Sigma)$, choose a diffeomorphism of $H$ with $H_g$ and pull back the height function on $\partial H_g$ to $\Sigma$.
\end{proof}

Note that this lemma can fail if $f$ is not injective on critical points, even if $f$ is Morse, because components of critical level sets containing several critical points may have higher genus neighborhoods.

\begin{proof}[Construction of $\cP_{B^0}$:]
 A generic $B^0$--parameterized family $F$ is nothing more than a single Morse function $f = F(0)$ on a surface $\Sigma$ which is injective on critical points. Define $\cP(F) = \cP_{B^0}(F)$ to be the handlebody filling $(H,\Sigma)$ uniquely characterized by $f$ as in the preceding lemma. 
\end{proof}

\begin{proof}[Construction of $\cP_{S^0}$:]
 Given a family of functions $F: S^0 = \{-1,1\} \to C^\infty(\Sigma)$, let $\cP_{S^0}(F)$ be the closed $3$--manifold obtained by gluing the two handlebodies $\cP(F_{-1})$ and $\cP(F_1)$ together along their common boundary $\Sigma$. More precisely, with orientations in mind:
 \[ \cP(F) = -\cP(F|_{-1}) \cup_\Sigma \cP(F|_1) \]
 Here it is important that the boundaries of these handlebodies are both {\em equal} to $\Sigma$, not just diffeomorphic to $\Sigma$, so that we can really glue by the identity map. The fact (item~\ref{I:all234d} in Theorem~\ref{T:Universal}) that every closed $3$--manifold arises this way is simply the fact that every $3$--manifold has a Heegaard splitting together with the part of Lemma~\ref{L:FcnToHBody} asserting that all handlebody fillings come from Morse functions. 

\end{proof}

Next we consider generic $S^1$-- and $B^1$--parametrized families. Note that, in a generic family $F: A \to C^\infty(\Sigma)$, with $A=S^1$ or $A=B^1$, $A$ can be subdivided into subintervals $A=A_1 \cup \ldots \cup A_n$ such that each $F|_{A_i}$ has the following behavior: (1) $F|_{\partial A_i}$ is generic, i.e. a pair of Morse functions injective on critical points. (2) In the interior of each $A_i$ there is at most one value of $t$ at which $F(t)$ is not Morse and injective on critical points; if there is such a value, label it $c_i$. (3) For each $c_i$, as $t$ varies from $c_i - \epsilon$ to $c_i+\epsilon$ for some small $\epsilon > 0$, either $F(t)$ experiences a birth or death of a single cancelling pair of critical points (so that $F(c_i)$ is not Morse) or exactly two critical points switch heights (so that $F(c_i)$ is Morse but not injective on critical points). 

\begin{definition} \label{D:ElemSub}
Each such subinterval $A_i$ in a subdivision of $A$ as above will be called an {\em elementary subinterval} and we say that $F|_{A_i}$ is an {\em elementary $1$--parameter family}. 
\end{definition}

In preparation for the upcoming discussion of $\cP_{S^0}$ and $\cP_{B^1}$ applied to elementary $1$--parameter families, we name some specific manifold-surface pairs. Fix integers $g \geq k \geq 0$.
\begin{enumerate}
 \item The $3$--dimensional pair $(\#^k S^1 \times S^2, \#^k S^1 \times S^1)$ refers to the standard genus $k$ Heegaard splitting of $\#^k S^1 \times S^2$. This can be explicitly described by identifying $\#^k S^1 \times S^2$ with $\partial ([-1,1] \times (\natural^k S^1 \times B^2))$, in which case $\#^k S^1 \times S^1$ is really $\{0\} \times (\#^k S^1 \times S^1$).
 \item The corresponding $4$--dimensional pair is $(\natural^k S^1 \times B^3,\#^k S^1 \times S^1)$, with boundary equal to $(\#^k S^1 \times S^2, \#^k S^1 \times S^1)$.
 \item The $3$--dimensional pair $(\#^k S^1 \times S^2, \#^g S^1 \times S^1)$ refers to the result of stabilizing (in the sense of Heegaard splittings) $(\#^k S^1 \times S^2, \#^k S^1 \times S^1)$ exactly $g-k$ times.
 \item The corresponding $4$--dimensional pair is $(\natural^k S^1 \times B^3,\#^g S^1 \times S^1)$, with boundary equal to $(\#^k S^1 \times S^2, \#^g S^1 \times S^1)$.
\end{enumerate}

\begin{lemma}
 Suppose the surface $\Sigma$ is a surface of genus $g$, and that $F: A \to C^\infty(\Sigma)$ is an elementary $1$--parameter family. Then $\cP(\partial F)$ is diffeomorphic to either $(\#^g S^1 \times S^2, \#^g S^1 \times S^1)$ or $(\#^{g-1} S^1 \times S^2, \#^g S^1 \times S^1)$.
\end{lemma}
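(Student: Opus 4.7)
The plan is to analyze $\cP(\partial F) = -H_{-1} \cup_\Sigma H_1$, where $H_{\pm 1} = \cP(F(\pm 1))$, by tracking how the handlebody filling $H_t = \cP(F(t))$ varies with $t$. By Lemma~\ref{L:FcnToHBody}, $H_t$ is determined rel.\ $\Sigma$ by which simple closed curves in $\Sigma$ are regular level components of $F(t)$. This data varies continuously with $t$, while the set of handlebody fillings of $\Sigma$ up to diffeomorphism rel.\ $\Sigma$ is discrete, so $H_t$ is locally constant on the open parameter set where $F(t)$ is Morse and injective on critical points. Consequently $H_t$ can change at most across the single interior event of the elementary family.

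The argument then splits into cases. If $F$ has no interior event, then $H_{-1} = H_1 = H$ and $\cP(\partial F)$ is the double of the genus $g$ handlebody $H$, which is $\#^g S^1 \times S^2$ with its standard genus $g$ Heegaard splitting: case (a). If $F$ contains a birth or death, the event is supported in a coordinate disk $D \subset \Sigma$; outside $D$, the regular level components of $F(-1)$ and $F(1)$ agree, while inside $D$ the only new regular level components are small contractible circles around the newborn or dying extrema, which bound disks in every filling. Hence $H_{-1} = H_1$ in this case too, again giving case (a). In the height switch case, two critical points collide on a single level at the event; a direct local analysis at the collision shows that the meridian system of $H_t$ either remains unchanged (case (a)) or undergoes a single replacement in which one meridian is swapped for a curve intersecting the old one transversely in exactly one point, which is precisely the stabilization data and yields $\cP(\partial F) \cong (\#^{g-1} S^1 \times S^2, \#^g S^1 \times S^1)$: case (b).

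The main technical obstacle is the height switch case: showing that \emph{when} the handlebody changes across the switch, the change is exactly a one-intersection meridian swap and not something more elaborate. A useful genus $1$ guiding example is the linear interpolation on $T^2$ from $f_{-1} = \sin\theta + \epsilon\cos\phi$ to $f_1 = \cos\phi + \epsilon\sin\theta$: the family is Morse throughout except for a single saddle height switch at the midpoint, the two endpoint handlebodies are the inside and outside solid tori, and the resulting double is $S^3$ with its genus $1$ Heegaard splitting, confirming case (b) when $g=1$. Extracting the correct local model from this example and verifying that an arbitrary height switch realizes at most a single stabilization is the technical heart of the argument.
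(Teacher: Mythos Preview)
Your overall architecture matches the paper's: reduce to showing $H_{-1}$ and $H_1$ are either diffeomorphic rel.~$\Sigma$ (giving the genus $g$ double) or differ by a single meridian swap with geometric intersection one (giving the once-stabilized genus $g-1$ splitting). Your treatment of the trivial and birth/death cases is fine and essentially equivalent to the paper's.

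The gap is exactly where you flag it: the height switch. You assert that ``a direct local analysis at the collision'' shows the change is at most a one-intersection meridian swap, but you do not perform that analysis; the $T^2$ example only exhibits one instance of case (b) and does nothing to rule out more drastic changes. The paper's method here is worth internalizing. At the switch time $t_*$, look at the critical level component $\Gamma$ containing both critical points and its regular neighborhood $N$ in $\Sigma$. The interesting case is when both critical points have index $1$ and lie in the same $\Gamma$; then $\Gamma$ is a $4$--valent graph with two vertices, and $N$ is an oriented surface in which $\Gamma$ separates $N$ into a positive and a negative half. Enumerating such $(N,\Gamma)$ gives exactly four possibilities: three with $N$ of genus $0$ and one with $N$ of genus $1$. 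When $N$ has genus $0$, the curves in $\Sigma \setminus N$ that bound disks already determine the handlebody filling, so $H_{-1}=H_1$. When $N$ has genus $1$, one reads off from the local model two essential curves $C_{-1},C_1 \subset N$ meeting transversely once, with $C_{\pm 1}$ bounding in $H_{\pm 1}$ and everything outside $N$ bounding in both; this is precisely your predicted single stabilization. Without this genus bound on $N$, your proposal does not exclude the possibility that the meridian systems of $H_{-1}$ and $H_1$ differ more substantially.
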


\begin{proof}
 If we can show that the handlebodies $\cP(F|_{\{-1\}})$ and $\cP(F|_{\{1\}})$ are diffeomorphic rel. $\Sigma$, then we know that $\cP(\partial F) \cong (\#^g S^1 \times S^2, \#^g S^1 \times S^1)$. We will show this in all but one case.

 When $F(t)$ is Morse and injective on critical points for all $t$ in $A$, the regular level components of $F(t)$ move by an isotopy of $\Sigma$, so that the handlebodies $\cP(F|_{\{-1\}})$ and $\cP(F|_{\{1\}})$ are diffeomorphic rel. $\Sigma$.

 Otherwise, let $t_* \in A$ be the parameter value at which the birth/death or height switch occurs, and let $f_* = F(t_*): \Sigma \to \R$.

 In the case of a birth/death event, we claim again that $\cP(F|_{\{-1\}})$ and $\cP(F|_{\{1\}})$ are diffeomorphic rel. $\Sigma$. To see this, let $\Gamma$ be the critical level component of $f_*$ containing the birth/death point $p$, assume that $f_*(p)=0$, and let $N$ be the component of $f_*^{-1}[-\e,\e]$ containing $p$. (Choose $\e$ small enough so that $N$ contains no other critical points, of course.) From the local model of a birth/death, we see that $\Gamma$ is a circle (smooth except at $p$) and $N$ is a smooth cylinder with $\partial N$ equal to two regular level components of $f_*$. We may assume that the regular level components of $F(t)$ do not change (except by isotopy) outside $P$ as $t$ ranges from $t < t_*$ to $t>t_*$ and thus, since the genus of $N$ is $0$, we do not need to know anything about regular level components in $N$ to know the handlebody filling.
 
 When the singular event is a critical value crossing, we have further cases to consider. Let the two critical points whose critical values cross be labelled $p$ and $q$. If $p$ and $q$ occur in different critical level components of their common level set for $f_*$, then changing their relative heights can only change regular level components by isotopies and thus does not change the handlebodies, and once again $\cP(F|_{\{-1\}})$ and $\cP(F|_{\{1\}})$ are diffeomorphic rel. $\Sigma$. If either $p$ or $q$ has index $0$ or $2$, this certainly happens. If $p$ and $q$ both have index $1$ and occur in the same component with, say, $f_*(p)=f_*(q)=0$, let $N$ be the component of $f_*^{-1}[-\e,\e]$ containing $p$ and $q$, where $[-\e,\e]$ contains no other critical values, and let $\Gamma$ be the component of $f_*^{-1}(0)$ containing $p$ and $q$. Then $N$ is a regular neighborhood of the graph $\Gamma$, which has two $4$--valent vertices $p$ and $q$. To construct all possible pairs $(N,\Gamma)$ like this, we can start with one edge between $p$ and $q$ and construct the various possibilities from there. Figure~\ref{F:SwitchOptions} enumerates the possibilities, bearing in mind the constraints that $N$ should be orientable and that $\Gamma$ should separate $N$ into a positive half $N \cap f_*^{-1}[0,\e]$ and a negative half $N \cap f_*^{-1}[-\e,0]$. We end up with four options, three of which have genus $0$ and one of which is genus $1$.
 \begin{figure}[h!]
 \centering
 \includegraphics{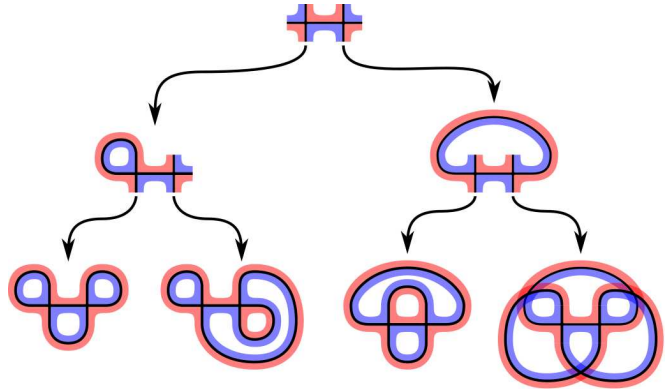}
 \caption{ Enumerating possible regular neighborhoods of a critical level set containing two index $1$ critical points.} \label{F:SwitchOptions}
 \end{figure} 
 In the genus $0$ case, we again claim, for the same reason as in the birth/death case, that $\cP(F|_{\{-1\}})$ and $\cP(F|_{\{1\}})$ are diffeomorphic rel. $\Sigma$. This is because we can assume that $F(-1)$ and $F(1)$ agree outside $N$ (up to isotopy), and thus the circles on $\Sigma \setminus N$ which bound disks in $\cP(F|_{\{-1\}})$ also bound disks in $\cP(F|_{\{1\}})$ and vice versa. Since $N$ has genus $0$, specifying the curves that bound in $\Sigma \setminus N$ unambiguously determines a handlebody filling of $\Sigma$ up to diffeomorphism rel. $\Sigma$.

 In the center in Figure~\ref{F:Genus1Switch} we have redrawn the genus $1$ case, with the two critical points labelled $p$ and $q$. The genus $1$ surface is obtained from the hexagon by identifying opposite edges.
 \begin{figure}[h!]
	\labellist
		\pinlabel $p$ [tl] at 156 57
		\pinlabel $q$ [tr] at 185 57
	\endlabellist
 \centering
 \includegraphics{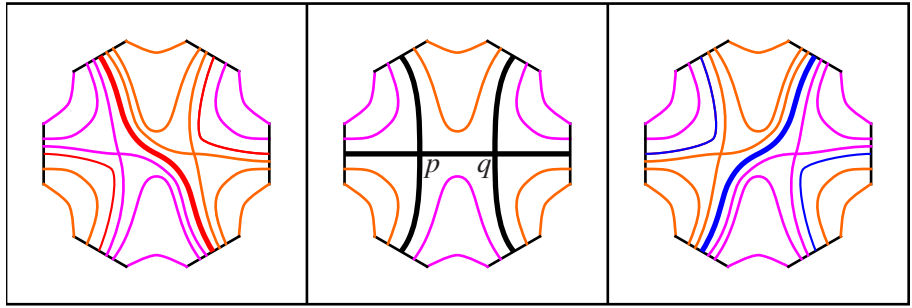}
 \caption{ A movie of the genus $1$ case for a critical value crossing.} \label{F:Genus1Switch}
 \end{figure}
 Note that $N$ has two boundary components, $f_*^{-1}(\pm \e)$. To the left and right we have drawn various regular and critical level sets as we perturb $f_*$ to a nearby function $f$ so as to either make $f(p)$ positive and $f(q)$ negative or vice versa. Positive level sets are orange while negative level sets are pink; we highlight the $0$ level set in red on the left, black in the middle and blue on the right. Thus these are three frames in a movie of $F(t)$, as $t$ ranges from $t_* - \e$ to $t_*+\e$. Let $H_{\pm 1} = \cP(F|_{\{\pm 1\}})$, the ``before'' and ``after'' handlebody fillings of $\Sigma$. Note that the thicker red and blue level components, if drawn on the same surface, intersect at a point; these are colored red and blue to suggest $\alpha$ and $\beta$ curves in a Heegaard diagram. The upshot is that $\Sigma$ contains an embedded genus $1$ surface $N$ with two boundary components such that:
 \begin{enumerate}
  \item Every curve in $\Sigma \setminus N$ which bounds a disk in $H_{-1}$ also bounds a disk in $H_1$, and vice versa.
  \item Both boundary components of $N$ bound disks in both $H_{-1}$ and $H_1$.
  \item There are essential simple closed curves $C_{-1}$ and $C_1$ in $N$ which intersect transversely at one point such that $C_{-1}$ bounds a disk in $H_{-1}$ and $C_1$ bounds a disk in $H_1$.
 \end{enumerate}
Note that since both boundary components of $N$ bound disks in $H_{\pm 1}$, any band sum in $N$ of these two boundary components bounds a disk in $H_{\pm 1}$, and thus we may replace $N$ with a genus $1$ surface with one boundary component, and still keep items (1) and (3) from the list above. From this we see that $\cP(\partial F)$ comes with a Heegaard splitting $(-H_{-1}) \cup_\Sigma H_1$ which is the result of stabilizing a genus $g-1$ splitting in which every curve that bounds a disk in one half of the splitting bounds in the other half as well. This implies that $\cP(\partial F) \cong (\#^{g-1} S^1 \times S^2, \#^g S^1 \times S^1)$.
\end{proof}

\begin{definition} \label{D:Types01}
 An elementary sub-interval $A_i$, or elementary $1$--parameter family $F|_{A_i}$, is of {\em type 0} if, in the above theorem, we have $\cP(F|_{\partial A_i}) \cong (\#^g S^1 \times S^2, \#^g S^1 \times S^1)$, and is of {\em type 1} if $\cP(F|_{\partial A_i}) \cong (\#^{g-1} S^1 \times S^2, \#^g S^1 \times S^1)$.
\end{definition}

\begin{proof}[Construction of $\cP_{B^1}$:]
 Given a generic $F: B^1 \to C^\infty(\Sigma)$, subdivide $B^1=[-1,1]$ into elementary subintervals $A_i = [t_{i-1},t_i]$, with $t_0=-1 < t_1 < \ldots < t_n=1$. Choose $\epsilon > 0$ such that, on each interval $A_i$, the parameter value $c_i$ at which $F$ fails to be Morse and injective on critical points is in the interval $(t_{i-1}+\epsilon,t_i-\epsilon)$. We begin the construction of $\cP(F)$ with $X_* = [-\epsilon,0] \times [-1,1] \times \Sigma$. For each $t_i$, let $(H_i,\Sigma) = \cP(F|_{\{t_i\}})$ be the handlebody filling of $\Sigma$ determined by the Morse function $F(t_i) : \Sigma \to \R$. Let $X_0 = [t_0,t_0+\epsilon] \times H_0$, let $X_n = [t_n-\epsilon,t_n] \times H_n$, and for each $i=1,\ldots,n-1$ let $X_i = [t_i-\epsilon,t_i+\epsilon] \times H_i$. Let $X_{**}$ be the result of gluing each $X_i$ to $X_*$ by identifying $\{t\} \times \Sigma \subset H_i$ with $\{-\epsilon\} \times \{t\} \times \Sigma \subset X_*$.
 
 Note that now $\partial X_{**}$ has $n+1$ boundary components, each obtained by gluing two handlebody fillings of $\Sigma$ together. One of them is $\cP(F|_{\partial B^1})$, and the other $n$ are $\cP(F|_{\partial A_i})$ for $i = 1, \ldots, n$. Since each $A_i$ is an elementary subinterval we can fill in each $\cP(F|_{\partial A_i})$ with either $\natural^g S^1 \times B^3$ (in the type $0$ case) or $\natural^{g-1} S^1 \times B^3$ (in the type $1$ case); the result is our $4$--manifold $X$, with $\cP(F) = (X,\Sigma)$, where $\Sigma$ is $\{(0,0)\} \times \Sigma \subset X_* \subset X$. Note that these final fillings are unique due to Laudenbach and Poenaru's result~\cite{LaudenbachPoenaru} that every self-diffeomorphism of $\#^n S^1 \times S^2$ extends to $\natural^n S^1 \times B^3$.
 
 A finer subdivision of $B^1$ into elementary subintervals only results in more type $0$ subintervals, and since $\natural^g S^1 \times B^3 \cong [0,1] \times (\natural^g S^1 \times B^2)$, it is easy to see that creating more type $0$ subintervals does not change the resulting $4$--manifold-surface pair in our construction.

\end{proof}

\begin{proof}[Construction of $\cP_{S^1}$]
 Given a generic $F: S^1 \to C^\infty(\Sigma)$, simply split $S^1$ into two $B^1$'s on which $F$ remains generic (i.e. split at parameter values where $F$ is Morse and injective on critical points), as $S^1 = B^1_- \cup B^1_+$, and glue $-\cP(F|_{B^1_-})$ to $\cP(F|_{B^1_+})$ along their common boundary $\cP(F|_{S^0})$, where the $S^0$ is $\partial B^1_+ = - \partial B^1_-$. (Note that we are gluing manifold-surface pairs, so the gluings should match the $\Sigma$ on one side with the $\Sigma$ on the other side, via the identity map. Since $\Sigma$ splits each $3$--dimensional boundary into two handlebodies, and since a self-diffeomorphism of a handlebody is determined up to isotopy by its restriction to the boundary, the gluings here are well defined.)
 Item~\ref{I:bound} 
 of Theorem~\ref{T:Universal} is clear, but the important thing that remains in this dimension is to prove item~\ref{I:all234d}, namely that all closed $4$--manifolds can be produced this way. This happens in the following lemmas.
\end{proof}

\begin{lemma} \label{L:S1XB3sFromPB1}
 Let $X = \natural^k S^1 \times B^3$ and let $\Sigma$ be a genus $g \geq k$ Heegaard surface in $\#^k S^1 \times S^2 = \partial X$. Then there exists a generic $B^1$--parameterized family of functions $F$ on $\Sigma$ such that $(X,\Sigma)$ is diffeomorphic rel. $\Sigma$ to $\cP(F)$.
\end{lemma}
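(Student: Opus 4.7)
The plan is to build $F$ by direct construction, inserting exactly $g - k$ type $1$ elementary subintervals to match the excess of the Heegaard genus over the rank of $X$. First, by Waldhausen's uniqueness theorem for Heegaard splittings of $\#^k S^1 \times S^2$, every genus-$g$ Heegaard splitting is isotopic to the standard one obtained from the minimal (genus-$k$) splitting by $g - k$ stabilizations, so we may place $\Sigma$ in standard position, with handlebodies $H_\pm$. Choose Morse functions $f_\pm : \Sigma \to \R$ with $\cP_{B^0}(f_\pm) = H_\pm$.

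For the base case $g = k$, the minimal-genus splitting has $H_-$ and $H_+$ both equal to $\natural^k S^1 \times B^2$ glued by the identity on $\Sigma_k$, so we may take $f_- = f_+ = f$ and let $F$ be the constant family $F(t) \equiv f$. With a single (trivially type $0$) elementary subinterval, one checks directly that $X_{**}$ is a product cobordism $I \times \#^k S^1 \times S^2$; filling the inner boundary by $\natural^k S^1 \times B^3$ yields $\cP(F) \cong (X, \Sigma)$ rel $\Sigma$.

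For the general case $g > k$, build $F$ as a generic path from $f_-$ to $f_+$ with exactly $g - k$ type $1$ elementary subintervals together with any necessary type $0$ ones. Each type $1$ event is arranged so that two index-$1$ critical points sit in the genus-$1$ configuration of the preceding lemma and switch heights; the corresponding inner boundary of $X_{**}$ is $(\#^{g-1} S^1 \times S^2, \#^g S^1 \times S^1)$, and gets filled by $\natural^{g-1} S^1 \times B^3$ rather than $\natural^g S^1 \times B^3$, effectively removing one $1$-handle from the total rank. A quick Euler-characteristic computation gives $\chi(\cP(F)) = 1 - g + (g - k) = 1 - k = \chi(X)$, while $\partial \cP(F) = \cP(F|_{\partial B^1}) = (-H_-) \cup_\Sigma H_+ = \partial X$ holds by construction.

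The main obstacle is upgrading agreement of $\chi$ and of the boundary to a diffeomorphism rel $\Sigma$. The plan for this step is to show that $\cP(F)$ is itself a $4$-dimensional $1$-handlebody, by tracing a handle decomposition through the pieces $X_*$, the $X_i$, and the fillings: each type $1$ filling cancels, together with the adjacent structure, one pair of handles coming from the $g-k$ stabilizations, while each type $0$ filling contributes a $\natural^g S^1 \times B^3$ that is absorbed using Laudenbach--Poenaru uniqueness~\cite{LaudenbachPoenaru} to control the gluing. Since $4$-dimensional $1$-handlebodies are classified up to diffeomorphism by their rank, and our rank is forced to be $k$ by $\chi$, this identifies $\cP(F) \cong \natural^k S^1 \times B^3 = X$; the resulting diffeomorphism is rel $\Sigma$ because $\Sigma$ sits as the canonical central slice $\{(0,0)\} \times \Sigma \subset X_*$ throughout the construction.
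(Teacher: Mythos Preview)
Your approach is in the same spirit as the paper's: both arrange exactly $g-k$ type~$1$ critical value switches and then argue that each such switch contributes a $2$--handle cancelling a $1$--handle, leaving $\natural^k S^1 \times B^3$. However, two of your steps are plans rather than proofs, and they are precisely the steps where the work lies.

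First, you assert that one can ``build $F$ as a generic path from $f_-$ to $f_+$ with exactly $g-k$ type~$1$ elementary subintervals,'' but you do not say how. For arbitrary choices of $f_\pm$ realizing $H_\pm$, there is no a~priori reason a generic homotopy between them has exactly $g-k$ type~$1$ events rather than more. The paper avoids this by choosing $f_0$ to be a specific height function on the standard embedded genus~$g$ surface so that all $\alpha_i$ are regular level components, and then performing one explicit height switch inside each of the $g-k$ disjoint once-punctured tori $T_i$ carrying the dual pair $(\alpha_i,\beta_i)$ for $i>k$. This produces $F$ with exactly $g-k$ type~$1$ subintervals and no others by construction.

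Second, your identification of $\cP(F)$ with $X$ hinges on ``tracing a handle decomposition'' to see that $\cP(F)$ is a $1$--handlebody, after which Euler characteristic and Laudenbach--Poenaru finish. But the sentence ``each type~$1$ filling cancels, together with the adjacent structure, one pair of handles'' is exactly the statement to be proved, not an argument for it. The paper's explicit $F$ makes this transparent: viewing $\cP(F)$ as a relative cobordism from $\cP(F|_0)$ to $\cP(F|_1)$, the $i$--th elementary piece is a $4$--dimensional $2$--handle attached to $[t_{i-1},t_{i-1}+\epsilon]\times H$ whose attaching circle is $\alpha_i$ and whose effect on the boundary handlebody is to replace ``$\alpha_i$ bounds'' by ``$\beta_i$ bounds.'' Since $\alpha_i$ is dual to the $i$--th $1$--handle of $H$, this $2$--handle cancels it. That concrete identification of the attaching curve is what your sketch is missing, and it depends on having built $F$ carefully as above. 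Once you have it, the Euler characteristic detour is unnecessary and the rel~$\Sigma$ statement follows directly from the cobordism picture rather than from a separate appeal to the classification of $1$--handlebodies.
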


\begin{proof}
 Consider the standard genus $g$ Heegaard diagram $(\Sigma, \alpha, \beta)$ for $\#^k S^1 \times S^2 = \partial X$ shown in Figure~\ref{F:Stdgk}, and let $f_0 : \Sigma \to \R$ be the (Morse) height function as indicated in the figure, so that all the $\alpha$ curves are regular level components of $f_0$. For $i = k+1, \ldots, g$, the dual pair $(\alpha_i,\beta_i)$ lies in a twice-punctured torus $T_i$ with both components of $\partial T_i$ being regular level components of $f_0$, and with no other $\alpha_j$ or $\beta_j$ intersecting $T_i$. Let $f_t$, $t \in [0,1]$, be the generic family such that, for $i = k+1, \ldots, g$, for  $t \in [(i-k-1)/(g-k),(i-k)/(g-k)]$, $f_t$ does not vary with $t$ outside $T_i$ and, inside $T_i$, $f_t$ undergoes a critical value crossing that changes from having $\alpha_i$ as a regular level component to having $\beta_i$ as a regular level component. In other words, the associated family $F: [0,1] \to C^\infty(\Sigma)$ is elementary of type $1$ on each subinterval $[(i-k-1)/(g-k),(i-k)/(g-k)]$, switching $\alpha_i$ to $\beta_i$ as regular level components, exactly as in Figure~\ref{F:Genus1Switch}.
 \begin{figure}[h!]
	\labellist
	\endlabellist
 \centering
 \includegraphics{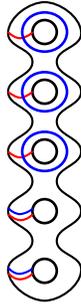}
 \caption{Standard genus $g$ Heegaard diagram for $\#^k S^1 \times S^2$, with the vertical height function so that the $\alpha$ curves (red) are all regular level components. (Here $g=5$ and $k=2$.)} \label{F:Stdgk} 
 \end{figure}
 
 We now claim that, for this $F$, we have that $\cP(F)$ is diffeomorphic rel. $\Sigma$ to $(X,\Sigma)$. To see this, think of $\cP(F)$ as a cobordism (with sides) from the $3$--dimensional handlebody $\cP(F|_{0})$ to the handlebody $\cP(F|_{1})$ built by stacking the elementary cobordisms corresponding to each elementary subinterval $[t_{i-1},t_i]$, with $t_i = (i-k)/(g-k)$. Each of these elementary cobordisms is simply a $4$--dimensional $2$--handle attached to $[t_{i-1},t_{i-1}+\epsilon] \times H$, where $H$ is a genus $g$ handlebody, and because the surgery corresponding to this $2$--handle attachment results in switching from $\alpha_i$ bounding a disk to $\beta_i$ bounding a disk, we see that this $2$--handle simply cancels the $4$--dimensional $1$--handle in $[t_{i-1},t_{i-1}+\epsilon] \times H$ dual to $\alpha_i$. Thus, the full cobordism contains $(g-k)$ $2$--handles each cancelling one of the $1$--handles in $[0,\epsilon] \times (\natural^g S^1 \times B^2)$, and this is precisely $X$.
\end{proof}

\begin{lemma} \label{L:TrivPath}
 Consider generic $G, G': B^0 \to C^\infty(\Sigma)$ such that the handlebodies $\cP(G)$ and $\cP(G')$ are diffeomorphic rel. $\Sigma$. Then there exists a generic family $F: B^1 \to C^\infty(\Sigma)$, with $F|_{-1} = G$ and $F|_{1} = G'$ such that any subdivision of $B^1$ into elementary subintervals for $F$ involves only subintervals of type $0$. In particular, $\cP(F)$ is a product, i.e. if $\cP(G) = (H,\Sigma)$ then $\cP(F)$ is diffeomorphic rel. $\Sigma$ to $([-1,1] \times H, 0 \times \Sigma)$.
\end{lemma}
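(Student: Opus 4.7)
The plan is to construct $F$ so that the handlebody filling $\cP(F|_{\{t\}})$ remains equal (rel.~$\Sigma$) to $(H,\Sigma) = \cP(G) = \cP(G')$ at every generic value of $t$. By the analysis preceding Definition~\ref{D:Types01}, a type-$1$ elementary subinterval is exactly one across which the handlebody changes via a Heegaard-genus reduction, whereas births, deaths, and genus-$0$ height switches all preserve the handlebody. Hence any $F$ that keeps the handlebody constant automatically has only type-$0$ elementary subintervals in every subdivision.

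I would first identify $\cP(G)$ with $\cP(G')$ rel.~$\Sigma$ via the given diffeomorphism and fix a reference Morse function $h\in C^\infty(\Sigma)$ with $\cP(h) = (H,\Sigma)$, obtained by restricting a standard height function on a standard embedding $H\hookrightarrow\R^3$ to $\Sigma$. It then suffices to produce type-$0$ generic paths from $G$ to $h$ and from $G'$ to $h$, and concatenate.

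To construct the path $G\leadsto h$, I would use the proof of Lemma~\ref{L:FcnToHBody} to write $G = h_{\mathrm{std}}\circ\psi_G|_\Sigma$ and $h = h_{\mathrm{std}}\circ\psi_h|_\Sigma$ for some diffeomorphisms $\psi_G,\psi_h\colon H\to H_g$. Decomposing $\psi_G\circ\psi_h^{-1}\in\Diff(H_g)$ into a product of generators of the handlebody mapping class group (Dehn twists along meridian disks, handle slides, and boundary-preserving isotopies), I would realize each generator on the boundary Morse function via a short birth/isotopy/death sequence, arranged so every intermediate height switch occurs in a disk or annulus neighborhood of its critical level set (thus genus $0$, hence type $0$). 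Concatenating these local realizations yields the type-$0$ path. Given such an $F$, the assembly of $\cP(F)$ is then routine: each elementary subinterval is filled by $\natural^g S^1\times B^3 \cong [0,1]\times H$, which is unique rel.~boundary by Laudenbach--Poenaru, and these product pieces glue along the constant identification of intermediate handlebodies with $H$ to produce $([-1,1]\times H,\{0\}\times\Sigma)$ rel.~$\Sigma$.

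The main obstacle is verifying that every generator of the handlebody mapping class group admits a type-$0$ Morse-function realization, in particular checking the genus-$0$ condition for every associated height switch. An alternative Cerf-theoretic route would instead start with an arbitrary generic family from $G$ to $G'$ and use $2$-parameter deformations (swallowtails and coincident height switches) to cancel type-$1$ elementary subintervals in pairs, exploiting the fact that the hypothesis $\cP(G)\cong\cP(G')$ rel.~$\Sigma$ forces the net handlebody change along any generic family between them to be trivial.
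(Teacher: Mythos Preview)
Your proposal has a gap at the step where you write $G = h_{\mathrm{std}} \circ \psi_G|_\Sigma$. Lemma~\ref{L:FcnToHBody} shows that every handlebody filling arises from \emph{some} Morse function obtained by pulling back a standard height function, but it does not show that \emph{every} Morse function inducing a given handlebody filling arises this way. In particular, $G$ may have many more critical points than $h_{\mathrm{std}}$, in which case no such $\psi_G$ exists. You would first need to normalize the critical-point structure of $G$ via type-$0$ births and cancellations, and then argue that two Morse functions with identical critical-point structure and the same handlebody filling differ by a diffeomorphism of $\Sigma$ extending over $H$; neither step is addressed, and the second is essentially the content of the lemma you are trying to prove. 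Your alternative Cerf-theoretic route is also only a sketch: cancelling type-$1$ subintervals in pairs via $2$--parameter deformations is not obviously possible, since the analysis of elementary polygons shows that such deformations can introduce $\pm\C P^2$ summands rather than simply removing type-$1$ events.

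The paper's argument avoids these difficulties by working directly with cut systems rather than factoring through $\Diff(H_g)$. One extracts from $G$ and $G'$ cut systems $C$ and $C'$ for $H$ consisting of regular level components, invokes the fact (Johannson) that any two cut systems for the same handlebody are related by a sequence of handle slides, and observes that two consecutive cut systems in such a sequence can be simultaneously realized as level components of a single Morse function. This reduces the problem to interpolating between two Morse functions that share a common cut system $C$; after making them agree near $C$ (a genus-$0$ modification), the remaining interpolation lives entirely in the genus-$0$ complement of $C$ and is therefore automatically type $0$. This route is more direct and sidesteps any appeal to generators of the handlebody mapping class group.
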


\begin{proof}
 Let $f = G(0)$ and $f' = G(0)$, generic Morse functions on $\Sigma$ with the property that there is a single handlebody filling $H$ of $\Sigma$ such that  all the regular level components of $f$ and $f'$ bound disks in $H$. We will construct a generic $1$--parameter family $f_t$, $t \in [-1,1]$, with $f_{-1}=f$ and $f_1 = f'$, together with a subdivision of $[-1,1]$ into subintervals $[t_{i-1},t_i]$ such that, on each interval $[t_{i-1},t_i]$, $f_t$ does not vary with $t$ outside a genus $0$ subsurface of $\Sigma$ bounded by regular level components. This suffices to prove the lemma since $f_t$ changing with $t$ inside a genus $0$ subsurface cannot change the associated handlebody filling.
 
 Let $C$ be a cut system for $H$ made of level components for $f$, and let $C'$ be a similar cut system for $f'$. Let $C_0 = C, C_1, C_2, \ldots, C_n=C'$ be a sequence of cut systems for $H$, each related to the next by a single handle slide~\cite{Johannson}. Then there exist Morse functions $f_1, f_2, \ldots, f_n$ such that both $C_0$ and $C_1$ are collections of regular level components for $f_1$, both $C_1$ and $C_2$ are collections of levels for $f_2$, and so on, until $C_{n-1}$ and $C_n$ are levels for $f_n$. This reduces our problem to having to interpolate from $f=f_0$ to $f_1$, from $f_1$ to $f_2$, and so on, in each case interpolating between two functions which have a single common cut system made of level components for both functions.
 
 So now suppose $f$ and $f'$ are given and there is a single cut system $C$ such that each curve in $C$ is a level component for both $f$ and $f'$. After a generic homotopy supported in a neighborhood of each component of $C$ (i.e. only modifying functions in genus $0$ subsurfaces) we may assume $f$ and $f'$ agree on a neighborhood of $C$. Since $C$ is a cut system, the complement of this neighborhood is also genus $0$, so we may now homotope $f$ to $f'$ via a generic homotopy supported on this genus $0$ subsurface, and we are done.
\end{proof}

\begin{proof}[Proof of item~\ref{I:all234d} of Theorem~\ref{T:Universal}, when $n=1$]
 We need to prove that, for every closed $4$--manifold $X$, there is a surface $\Sigma \subset X$ and a generic $S^1$--parameterized family $F: S^1 \to C^\infty(\Sigma)$ such that $(X,\Sigma)$ is diffeomorphic rel. $\Sigma$ to $\cP(F)$.
 
 Let $X = X_1 \cup X_2 \cup X_3$ be a {\em $(g,k)$--trisection} of $X$; this is a decomposition such that each $X_i$ is diffeomorphic to $\natural^k S^1 \times B^3$, each $X_i \cap X_j$ is diffeomorphic to the $3$--dimensional handlebody $\natural^g S^1 \times B^2$, and $X_1 \cap X_2 \cap X_3$ is a genus $g$ surface. The existence of trisections on all closed $4$--manifolds is proved in~\cite{GayKirbyTrisections}. Let $\Sigma = X_1 \cap X_2 \cap X_3$. Applying Lemma~\ref{L:S1XB3sFromPB1} to each $X_i$, we get three families $F_1, F_2, F_3: B^1 \to C^\infty(\Sigma)$ such that $(X_i,\Sigma)$ is diffeomorphic rel. $\Sigma$ to $\cP(F_i)$. We cannot glue these together to form a $S^1$--parameterized family because we do not know, for instance, that $F_1|_{1} = F_2|_{-1}$ or that $F_2|_{1} = F_3|_{-1}$. We do know, however, that $\cP(F_i|_{1})$ is diffeomorphic rel. $\Sigma$ to $\cP(F_{i+1}|_{-1})$, both being diffeomorphic rel. $\Sigma$ to the handlebody $(X_i \cap X_{i+1}, \Sigma)$. Thus, by Lemma~\ref{L:TrivPath} we can interpolate between $F_i|_{1}$ and $F_{i+1}|_{-1}$ to create a $S^1$--parameterized family $F$ which is obtained by gluing the $X_i$ together with extra collars on the handlebodies between them, and the result is diffeomorphic rel. $\Sigma$ to $(X,\Sigma)$.
\end{proof}

Finally we consider $2$--parameter families; in this paper we are only concerned with the case of $B^2$--parameterized families. Note that when $F$ is a generic $B^2$--parameterized family of functions on $\Sigma$, there is a decomposition of $B^2$ into polygonal disks $B^2 = P_1 \cup \ldots \cup P_n$ which meet in pairs along intervals (edges) and in triples at points (vertices) such that:
\begin{enumerate}
 \item $F$ is generic on each polygon, along each edge and at each vertex.
 \item For each edge $e$, $F|_e$ is an elementary $1$--parameter family.
 \item For each polygon $P_i$, there is at most one point $p_i \in P_i$ at which one of the following occurs:
 \begin{enumerate}
  \item A swallowtail.
  \item A transverse triple point of self-intersection of the image of $\Crit_0(F)$ in $B^2 \times \R$, i.e. three Morse critical points with the same critical value.
  \item A transverse intersection of $\Crit_1(F)$ with $\Crit_0(F)$, i.e. a birth/death and a Morse critical point with the same critical value.
 \end{enumerate}
\end{enumerate}

\begin{definition} \label{D:ElemPoly}
Each such polygon $P_i$ in a subdivision of $A$ as above will be called an {\em elementary polygon} and we say that $F|_{A_i}$ is an {\em elementary $2$--parameter family}. 
\end{definition}
We now prepare to understand the $4$--manifolds obtained by applying $\cP_{S^1}$ to the boundary of an elementary $2$--parameter family. To do this, we first describe and name some model $4$--manifold-surface pairs and associated $5$--manifold-surface pairs. 
\begin{enumerate}
 \item Given a genus $g$, let $\Sigma_g = \#^g S^1 \times S^1$, which is the boundary of the handlebody $H_g = \natural^g S^1 \times B^2$. Let $Z_g = D^2 \times H_g$, identify $\Sigma_g$ with $\{0\} \times \Sigma_g \subset Z_g$, and let $X_g = \partial Z_g$. This gives us pairs $(X_g,\Sigma_g) = \partial (Z_g, \Sigma_g)$, with $X_g \cong \#^g S^1 \times S^3$ and $Z_g \cong \natural^g S^1 \times B^4$.
 \item Recall that the $4$--manifold-surface pair associated to a type $1$ elementary $1$--parameter family is $(\natural^{g-1} S^1 \times B^3, \#^g S^1 \times S^1)$, where the surface $\#^g S^1 \times S^1$ is the once-stabilized Heegaard surface in $\#^{g-1} S^1 \times S^2 = \partial \natural^{g-1} S^1 \times B^3$. From this we construct a $5$--manifold-surface pair $(Z_{g-1},\Sigma_g)$ where $Z_{g-1} = [-1,1] \times \natural^{g-1} S^1 \times B^3$ and $\Sigma_g = \{0\} \times \#^g S^1 \times S^1$, and we let $(X_{g-1},\Sigma_g) = \partial (Z_{g-1},\Sigma_g)$. Note that $(X_{g-1},\Sigma_g)$ can also naturally be seen as the double of $(\natural^{g-1} S^1 \times B^3, \#^g S^1 \times S^1)$, and that $X_{g-1}  \cong \#^{g-1}
 S^1 \times S^3$ and $Z_{g-1} \cong \natural^{g-1} S^1 \times B^4$.
 \item Let $T = \{[z_0:z_1:1] \mid |z_0|=|z_1|=1\}$ be the standard Lagrangian torus in $\C P^2$, giving us the $4$--manifold-surface pair $(\C P^2, T)$  and consider the $5$--manifold-surface pair $([0,1] \times \C P^2, T=\{0\} \times T)$. Note that $\partial ([0,1] \times \C P^2, \{0\} \times T) = (\C P^2, T) \amalg (\C P^2, \emptyset)$.
 \item Finally, let $X^\pm_g = X_{g-1} \#(\pm \C P^2)$, let $\Sigma^\pm_g = \Sigma_{g-1} \# T$ and let $Z^\pm_g = Z_{g-1} \natural ([0,1] \times (\pm \C P^2))$, where the boundary connected sum occurs along the boundary component $\{0\} \times (\pm \C P^2)$ in the $[0,1] \times \C P^2$ summand. This yields the pairs $(X^\pm_g,\Sigma^\pm_g)$ and $(Z^\pm_g,\Sigma^\pm_g)$, with $\partial (Z^\pm_g,\Sigma^\pm_g) = (X^\pm_g,\Sigma^\pm_g) \amalg (\pm \C P^2, \emptyset)$.
\end{enumerate}

\begin{lemma}
 If $P_i$ is an elementary polygon for a generic $F: B^2 \to C^\infty(\Sigma)$, with $\Sigma$ a surface of genus $g$, then $\cP(F|_{\partial P_i})$ is diffeomorphic to either $(X_g,\Sigma_g)$, $(X_{g-1},\Sigma_g)$ or $(X^\pm_g,\Sigma^\pm_g)$.
\end{lemma}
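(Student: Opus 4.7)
The plan is to carry out a case analysis on the singular event (if any) in the interior of $P_i$, based on the three possibilities listed in Definition~\ref{D:ElemPoly}: a swallowtail, a transverse triple self-intersection of the image of $\Crit_0(F)$, or a transverse intersection of $\Crit_1(F)$ with $\Crit_0(F)$. For each case, I would use the local normal form for the singularity to determine the cyclic sequence of elementary subintervals around $\partial P_i$ together with the type ($0$ or $1$) of each. Applying the construction of $\cP_{S^1}$ to this loop then glues together a small number of model cobordisms whose total topology I identify with one of the three pairs listed in the lemma.

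If every elementary subinterval on $\partial P_i$ is of type $0$, then by Lemma~\ref{L:TrivPath} one can (after a small homotopy of the extension, which does not affect $\cP(F|_{\partial P_i})$) arrange that $F|_{P_i}$ is a product up to isotopy over a common handlebody $H=H_g$. The construction of $\cP_{S^1}$ then identifies $\cP(F|_{\partial P_i})$ with the boundary of $D^2 \times H_g$, which is exactly $(X_g,\Sigma_g)$. This covers the swallowtail case, whose discriminant in the parameter plane is a standard cusp crossed transversely by $\partial P_i$ at two birth/death points (both type $0$), and it also covers triple collisions and birth/death--Morse collisions for which every induced height switch lies in a genus $0$ subsurface. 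If instead $\partial P_i$ contains exactly one type $1$ subinterval, then the $\cP_{S^1}$ construction produces $(X_{g-1},\Sigma_g)$: the unique type $1$ cobordism is modeled on $[-1,1]\times\natural^{g-1} S^1\times B^3$ with the once-stabilized surface as central slice, while the remaining type $0$ subintervals contribute only collars that can be absorbed.

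The remaining possibility is that $\partial P_i$ contains two type $1$ subintervals whose genus $1$ neighborhoods meet nontrivially inside $\Sigma$. This arises, for instance, when a birth/death arc transversely crosses a type $1$ height switch, or when two of the three arcs of height switches meeting at a triple point are themselves type $1$. Here the $\cP_{S^1}$ construction glues the two type $1$ elementary cobordisms along a pair of dually intersecting curves on $\Sigma_g$, which is the Hopf-like attaching pattern producing a $\pm \C P^2$ connected summand on top of the type $0$ background, giving $(X^{\pm}_g,\Sigma^{\pm}_g)$. I expect the main obstacle to be a clean case analysis here: one must verify both that \emph{only} $\pm\C P^2$ summands (and not more general $2$-handle surgery manifolds) can appear, and that the sign is determined by the local orientation of the Cerf graphic. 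This will require tracking the $\alpha$ and $\beta$ curves through the local models of the singular events, essentially performing a small piece of Kirby calculus on each associated Heegaard diagram, in the same spirit as the genus $1$ analysis illustrated in Figure~\ref{F:Genus1Switch}.
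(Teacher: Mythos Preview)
Your case breakdown by the number of type~$1$ subintervals is off by one throughout, and this leads to a genuine gap. On a closed loop $\partial P_i$ the handlebody filling at the subdivision points must return to itself after one full circuit; since a type~$0$ subinterval leaves the filling unchanged while a type~$1$ subinterval replaces one bounding curve in a genus~$1$ summand by a dual curve, a single type~$1$ subinterval is impossible. The correct trichotomy is: zero type~$1$ intervals give $(X_g,\Sigma_g)$; \emph{two} type~$1$ intervals (which must undo each other) give the double of the type~$1$ elementary cobordism, i.e.\ $(X_{g-1},\Sigma_g)$; and \emph{three} type~$1$ intervals, with bounding curves cycling through a meridian, a longitude, and a $(1,\pm 1)$--curve on one torus summand, give $(X^\pm_g,\Sigma^\pm_g)$. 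Your ``two type~$1$ with overlapping genus~$1$ neighborhoods $\Rightarrow \pm\C P^2$'' scenario does not occur: two changes on a circle are forced to be inverse to one another.

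The paper's proof supplies exactly the structural ingredient your sketch is missing. The only case that can produce more than two type~$1$ edges is the triple critical value switch, where $P_i$ is a \emph{hexagon} (one edge per ordering of the three critical points), and the heart of the argument is a separate claim that at most three of these six edges can be type~$1$. This is proved by examining the three possible Morse embeddings of a genus~$1$ surface with three boundary components and checking that two consecutive height switches sharing a critical point cannot both be type~$1$. The paper also shows directly that both the swallowtail and the birth--Morse crossing have genus~$0$ critical-level neighborhoods, so they always yield $(X_g,\Sigma_g)$; your suggestion that a birth/death arc crossing a type~$1$ switch can produce $\pm\C P^2$ is therefore incorrect. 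To repair your argument you need to (i) fix the parity of the type~$1$ count, (ii) identify the hexagon structure at a triple switch, and (iii) prove the ``at most three of six'' bound.
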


\begin{proof}
 To repeat, there is at most one point $p_i \in P_i$ at which one of the following occurs:
 \begin{enumerate}
  \item A swallowtail.
  \item A transverse triple point of self-intersection of the image of $\Crit_0(F)$ in $B^2 \times \R$, i.e. three Morse critical points with the same critical value. We will call this a {\em triple critical value switch}.
  \item A transverse intersection of $\Crit_1(F)$ with $\Crit_0(F)$, i.e. a birth/death and a Morse critical point with the same critical value. We will call this a {\em birth-Morse crossing}
 \end{enumerate}
 
 First we consider the case where there is no such point $p_i$. Then either $F|_{t}$ is Morse and injective on critical points for all $t \in \partial P_i$ or exactly two critical value switches occur in $F|_{\partial P_i}$. In both cases, $\cP(F|_{\partial P_i})$ is a double of $\cP(F|_{e})$ where $e$ is one edge of $P_i$, and thus the result is either $(X_g,\Sigma_g)$ or $(X_{g-1},\Sigma_g)$.
 
 When there is such a point $p_i$, first consider a swallowtail. From the local model of a swallowtail, the critical level component containing the swallowtail is a figure eight graph just like the critical level component containing a Morse critical point in the $B^0$--parameterized case. (Note that the swallowtail is the only critical point in this level set.) In particular, the critical level component has a genus $0$ neighborhood, so the handlebody filling is the same for any small perturbation of the swallowtail and thus $\cP(F|_{\partial P_i})$ is $(X_g,\Sigma_g)$.
 
 In a birth-Morse crossing, we have a Morse critical value which a birth/death event crosses in a $2$--parameter sense. At the parameter value when the Morse critical point and the birth/death point are at the same level, even if they are in the same component of the same level, that component is still a figure eight (with one non-smooth point, the birth/death), and it's neighborhood is thus genus $0$. Therefore the birth-Morse crossing can be taken to be a $2$--parameter family of functions which does not change outside a genus $0$ subsurface, so that $\cP(F|_{\partial P_i})$ is $(X_g,\Sigma_g)$.
 
 Thus the only thing interesting that can happen happens with a triple critical value switch. To analyze this, we consider the critical level component of $f_* = F|_{\{p_i\}}$ containing three index $1$ Morse critical points. (If we do not have all three critical points in the same component, we are again in one of the two easy cases $(X_g,\Sigma_g)$ or $(X_{g-1},\Sigma_g)$. If any have index $0$ or $2$, they cannot all lie in one component.) This is a $4$--valent graph $\Gamma$ with three vertices $p$, $q$ and $r$ (the three critical points) and an oriented regular neighborhood $N$ such that $\Gamma$ separates $N$ into two components $N_\pm$. Euler characteristic considerations show that $P$ can either be genus $0$ with $5$ boundary components or genus $1$ with $3$ boundary components and, again, we are only interested in the genus $1$ case. (Genus $0$ means that the handlebody filling is the same for all perturbations of the function, so that we get $(X_g,\Sigma_g)$.) The polygon $P_i$ must be a hexagon, since there are six ways to order the three critical points $p$, $q$, $r$ by height, and each edge of the hexagon represents a $1$--parameter critical value switch. Figure~\ref{F:3Perm} illustrates a neighborhood of the singular point $p_i$ at the center of $P_i$, labelled by {\em ordered partitions} of the set $\{p,q,r\}$ of critical points. The orderings are given by the Morse function $f_t$ for $t \in P_i$, and are abbreviated in the figure with the following notation: $pq|r$ means that $f_t(p)=f_t(q)<f_t(r)$, or $pqr$ means $f_t(p)=f_t(q)=f_t(r)$, or $r|q|p$ means $f_t(r)<f_t(q)<f_t(p)$. Thus the Morse function at the center of the figure has all three critical points at the same level, and along the six black rays two critical points are at the same level while one is above or below that level, and between the rays all three critical points have distinct critical values. The hexagon $P_i$ is dual to the decomposition shown; the edges of $P_i$ are perpendicular to the six rays shown. (This hexagon is otherwise known as the {\em permutahedron} on three letters, a clue for how one might approach higher dimensional cases.)
\begin{figure}[h!]
 \labellist
  \small
  \pinlabel* \fcolorbox{black}{white}{$pqr$} at 50 57
  \pinlabel* \fcolorbox{black}{white}{$p|qr$} at 50 88
  \pinlabel* \fcolorbox{black}{white}{$pq|r$} at 77 72
  \pinlabel* \fcolorbox{black}{white}{$q|pr$} at 77 42
  \pinlabel* \fcolorbox{black}{white}{$qr|p$} at 50 26
  \pinlabel* \fcolorbox{black}{white}{$r|qp$} at 25 42
  \pinlabel* \fcolorbox{black}{white}{$rp|q$} at 25 72
  \pinlabel* \fcolorbox{black}{white}{$p|q|r$} at 70 91
  \pinlabel* \fcolorbox{black}{white}{$q|p|r$} at 90 57
  \pinlabel* \fcolorbox{black}{white}{$q|r|p$} at 70 23
  \pinlabel* \fcolorbox{black}{white}{$r|q|p$} at 30 23
  \pinlabel* \fcolorbox{black}{white}{$r|p|q$} at 10 57
  \pinlabel* \fcolorbox{black}{white}{$p|r|q$} at 30 91
 \endlabellist
 \centering
 \includegraphics[width=.45\textwidth]{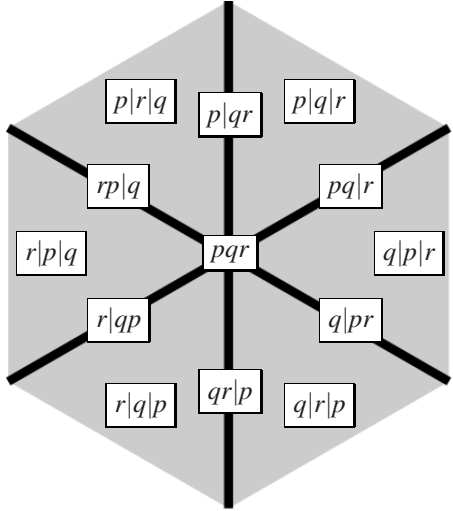}
 \caption{The permutahedron on $3$ letters, representing a neighborhood in parameter space of a triple critical value switch.} \label{F:3Perm} 
 \end{figure}

 Thus $F|_{\partial P_i}$ is divided into six elementary $1$--parameter families;
 we claim now that at most three of these can be of type $1$. If one believes this claim, then the handlebody filling associated to a point on $\partial P_i$, as one moves around $\partial P_i$, changes at most three times, and those changes (in the sense of which curves bound disks in the handlebody) all occur in a fixed genus $1$ subsurface $P$ of $\Sigma$ bounded by curves that themselves bound disks in the handlebody. Up to diffeomorphism, then, we either have no changes, or two changes, switching from a meridian bounding a disk to a longitude bounding a disk and then back, or three changes, switching from a meridian to a longitude to a $(1,\pm 1)$--curve on the torus. Zero changes gives $(X_g,\Sigma_g)$ and two changes gives $(X_{g-1},\Sigma_g)$, exactly as before. Three changes means that $\cP(F|_{\partial P_i})$ is described by a trisection diagram $(\Sigma,\alpha,\beta,\gamma)$ in which $\alpha$, $\beta$ and $\gamma$ only differ in one $T^2$ summand, which gives $(X^\pm_g, \Sigma^\pm_g)$ (see the trisection diagrams for $S^1 \times S^3$ and $\pm \C P^2$ in~\cite{GayKirbyTrisections}).

 It remains to prove that we have at most three type $1$ critical value switches around $\partial P_i$. In fact this claim follows from a simpler claim: Suppose that $f_t: P \to [-1,1]$ is a generic $1$--parameter family of functions on a surface $P$ of genus $1$ with $3$ boundary components, with $f_t^{-1}(1)$ equal to two boundary components of $P$ and $f_t^{-1}(-1)$ equal to one boundary component. Furthermore, suppose that $f_t$ has exactly three index $1$ critical points, and no other critical points, for all $t$, that there is exactly one critical value switch at $t=-1/2$ and one critical value switch at $t=1/2$, and $f_t$ is a generic Morse function for all other values of $t$, and finally that the two critical points that switch heights at $t=-1/2$ are not the same two critical points that switch at $t=1/2$. Then $f_t$ can not be of type $1$ on both intervals $[-1,0]$ and $[0,1]$. This is because, up to diffeomorphism, $f_0$ can be taken to be the vertical height function on one of the three genus $1$ surfaces embedded in $\R^3$ illustrated in Figure~\ref{F:3Genus1}. If, for example, $f_0(p) = 1/2$,  $f_0(q) = 0$, and $p$ and $q$ are to switch heights in a type $1$ switch, the component of $f_0^{-1}([-\e,1/2+\e])$ containing $p$ and $q$ needs to have genus $1$, but this can clearly not be true for both the pairs $(p,q)$ and $(q,r)$.
 \begin{figure}[h!]
 \labellist
  \hair 1.5pt
  \small
 \endlabellist
 \centering
 \includegraphics{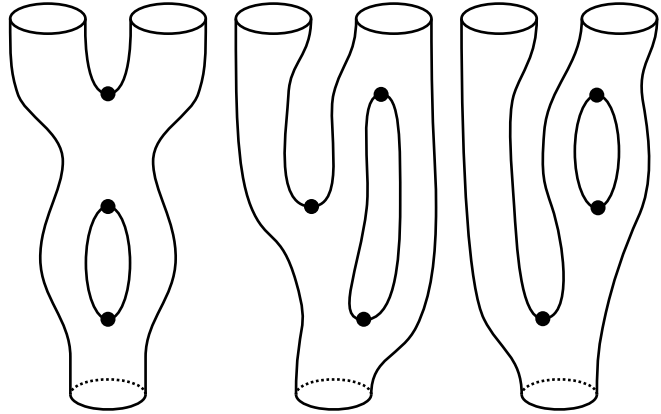}
 \caption{Three Morse functions on a genus $1$ surface with $3$ boundary components.} \label{F:3Genus1} 
 \end{figure} 

\end{proof}

\begin{definition} \label{D:PolyTypes01}
 An elementary polygon $P_i$, or an elementary $2$--parameter family $F = F|_{P_i}$ is of {\em type 0} if, in the above theorem, we have $\cP(F) \cong (X_g,\Sigma_g)$, is of {\em type 1} if $\cP(F) \cong (X_{g-1},\Sigma_g)$, and is of {\em type 2} if $\cP(F) \cong (X^\pm_g,\Sigma^\pm_g)$.
\end{definition}

\begin{proof}[Construction of $\cP_{B^2}$:]
 We parallel the construction of $\cP_{B^1}$ as closely as possible. Given a generic $F: B^2 \to C^\infty(\Sigma)$, subdivide $B^2$ into elementary polygons $B^2 = P_1 \cup \ldots \cup P_n$. Let $v_1, \ldots, v_l$ be the vertices and let $e_1, \ldots, e_m$ be the edges of this subdivision. Thicken the $0$-- and $1$--skeleton of this subdivision in the following sense: Let $V_i$ be a small disk neighborhood of each vertex $v_i$ (or a half-disk neighborhood when $v_i \in \partial B^2$). Shrink each $e_j$ so as to intersect the $V_i$ only at the endpoints of $e_j$ and let $E_j $ be a collar neighborhood of each $e_j$ (i.e. $E_j$ is identified with $I \times e_j$ where $I = [-\e,\e]$ or $[-\e,0] \times e_j$, according to whether $e_j$ is in the interior or boundary of $B^2$) which intersects the $V_i$ only at $I \times \partial e_j$. Finally shrink each $P_k$ so that $P_k$ intersects the union of the $V_i$ and the $E_j$ exactly along $\partial P_k$. 
 
 Let $Z_* = [-\epsilon,0] \times B^2 \times \Sigma$. For each vertex $v_i$, let $(H_i,\Sigma) = \cP(F|_{\{v_i\}})$, and let $Z_{v_i} = V_i \times H_i$. Let $Z_{**}$ be the result of attaching each $Z_{v_i}$ to $Z_*$ by identifying $\{t\} \times \Sigma \subset Z_{v_i}$ with $\{-\epsilon\} \times \{t\} \times \Sigma \subset Z_*$.
 
 Now, for each edge $e_j$, let $(X_j,\Sigma) = \cP(F|_{e_j})$ and let $Z_{e_j} = I \times X_j$ (where $I = [-\e,\e]$ or $[-\e,0]$ as above, according to whether $e_j$ is in the interior or boundary of $B^2$). Recall from the construction of $\cP_{B^1}$ that $X_j$ is built beginning with $[-\e,0] \times e_j \times \Sigma$, and thus for each $t \in e_j$ we have a particular copy of $\Sigma$ in $X_j$, namely $\{0\} \times \{t\} \times \Sigma$. Let $Z_{***}$ be the result of attaching each $Z_{e_j}$ to $Z_{**}$ as follows: For each $x \in I$ and each $t \in e_j$, first identify $\{x\} \times \{0\} \times \{t\} \times \Sigma$ in $X_j$ with $\{-\e\} \times \{(x,t)\} \times \Sigma$ in $Z_* \subset Z_{**}$. (The last $(x,t)$ is a point in $I \times e_j$ which is identified with $E_j \subset B^2$.) Also, recall that each $E_j$ intersects two vertex neighborhoods $V_i$ and $V_{i'}$ in intervals which can be naturally identified with $I \times \{v_i\}$ and $I \times \{v_{i'}\}$. Since $X_j$ contains at one end of its boundary a copy of the handlebody $H_i$ and at the other end a copy of the handlebody $H_{i'}$, we can next identify $I \times H_i$ in $Z_{e_j}$ with $I \times H_i$ in $Z_{v_i} \subset Z_{**}$ and also identify $I \times H_{i'}$ in $Z_{e_j}$ with $I \times H_{i'}$ in $Z_{v_{i'}} \subset Z_{**}$. This completes the construction of $Z_{***}$.

 Now note that $Z_{***}$ has one ``outer'' boundary component, diffeomorphic to $\cP(F|_{\partial B^2})$, and one ``inner'' boundary component for each elementary polygon $P_k$, diffeomorphic to $\cP(F|_{\partial P_k})$. Since we have classified the manifolds that arise as $\cP(F|_{\partial P_k})$, we know how to fill these extra boundary components in: In the type 0 case fill in with $\natural^g S^1 \times B^4$, in the type 1 case fill in with $\natural^{g-1} S^1 \times B^4$, and in the type 2 case fill in with $(\natural^{g-1} S^1 \times B^4) \natural ([0,1] \times (\pm \C P^2))$. In the type 2 case we are left with $\pm \C P^2$ boundary components, and in the other cases we completely cap off these boundaries.
 
 
\end{proof}

%
%
%
\bibliographystyle{plain}
%

%

\bibliography{Omega34}

\end{document}